\definecolor{bluegray}{rgb}{0.1, 0.1, 0.6}
\def\bg{\color{bluegray}}
\def\bk{\color{black}}
\newtheorem{theo}{\bg Theorem}
\newtheorem{lemma}[theo]{\bg Lemma}
\newtheorem{coro}[theo]{\bg Corollary}
\theoremstyle{definition}
\newtheorem{define}{\bg Definition}
\newtheorem{question}{\bg Question}
\newcommand{\Q}{{\mathbb Q}}
\newcommand{\T}{{\mathbb T}}
\newcommand{\A}{{\mathbb A}}
\newcommand{\R}{{\mathbb R}}
\newcommand{\N}{{\mathbb N}}
\def\eps{{\epsilon}}
 \newcommand{\Z}{{\mathbb Z}}
\def\id{{\rm id}}
\def\cC{\mathcal C}
\def\N{{\mathbb{N}}}
\author{Artur Avila and Bassam Fayad}
\title[\bg Non-differentiable irrational curves for $C^1$ twist maps]{\bg Non-differentiable irrational curves 
for $C^1$ twist map}
\begin{document}

\bg \begin{abstract} \color{black} We construct a $C^1$ 
symplectic twist map $g$ of the annulus that has an essential
invariant curve $\Gamma$ such that $\Gamma$ is not differentiable and  $g$ restricted to $\Gamma$ is minimal.
\end{abstract}

\maketitle

We denote $\T=\R/\Z$ the circle and $\A=\T \times \R$ the annulus. An important class of area preserving maps of the annulus are the so called twist maps or maps that deviate the vertical, since this class of maps describes the behavior of area preserving surface diffeomorphisms in the neighborhood of a generic elliptic periodic point.

More precisely, a $C^1$ diffeomorphism $g$ of the annulus that is isotopic to identity is a positive twist map (resp. negative twist map) if, for any given lift $\tilde{g} : \R^2 \to \R^2$ of $g$, and for every $\theta \in \R$, the maps $r \mapsto \tilde{\pi} \circ \tilde{g}(\theta,r)$ is an increasing (resp. decreasing)
diffeomorphisms. A twist map may be positive or negative. Here $\tilde{\pi}$ denotes the lift of the first projection map.

An {\it essential invariant curve} by a diffeomorphism $g$ of the annulus is a homotopically non-trivial simple loop that is invariant by $g$. 

When $f$ is a $C^1$ twist map, it is known from Birkhoff theroy that any invariant essential curve by $g$ is the graph of a Lipschitz map over $\T$. Furthermore, it was proven by M.-C. Arnaud in \cite{mc1} that 
the latter map must be $C^1$ on a $G_\delta$ set of $\T$ of full measure. 

Numerical experiment show that the invariant curves of a smooth twist map are actually more regular than just what the Birkhoff theory predicts. Moreover, in the perturbative setting of quasi-integrable twist maps, KAM theory provides a large measure set of smooth invariant curves. The question of the regularity of the invariant curves of twist maps is thus a natural question that is also related to the study of how the KAM curves disappear as the perturbation of integrable curves becomes large. 

Another natural and related question is that of the regularity of the boundaries of Birkhoff instability zones. A Birkhoff instability zone of a twist map $g$ is an open set of the annulus that is homeomorphic to the annulus, that does not contain any invariant essential curve, and that is maximal for these properties. By Birkhoff theory the boundary of an instability zone is an invariant curve that is a Lipschitz graph. We refer to the nice introduction of \cite{mc2} where many features and questions are discussed about the boundaries of Birkhoff instability zones.

In \cite{mather} the following question was asked. 

\begin{question} (J. Mather, \cite[Problem 3.1.1]{mather}) Does there exist an example of a symplectic $C^r$ 
twist map
with an essential invariant curve that is not $C^1$ and that contains no periodic point?
\end{question}

In \cite{herman_asterisque}, Herman gave an example of a $C^2$ twist map of the annulus that has a $C^1$ invariant curve  on which the dynamic is conjugated to the one of a Denjoy counterexample. By Denjoy theorem on topological conjugacy of $C^2$ circle diffeomorphisms with irrational rotation number, such a curve cannot be $C^2$.

In \cite{mc3}, M.-C. Arnaud gave an example of a $C^2$ twist map $g$ of the annulus that has a an invariant curve $\Gamma$ that is non-differentiable on  which the dynamic is conjugated to the one of a Denjoy counterexample. In addition, she shows that in any $C^1$ neighborhood of $g$, there exist twist maps with Birkhoff instability zones having $\Gamma$ for a boundary, and having the same dynamics as $g$ on $\Gamma$. 

\bigskip

In \cite{mc2}  the following natural question is raised. 
\begin{question} (M.-C. Arnaud)  
Does there exist a regular symplectic twist map of the annulus  that has an essential invariant curve
that is non-differentiable on which the restricted dynamics is
minimal?
\end{question}

In this note we give a positive answer to this question in low regularity. 

\begin{theo} \label{main} There exists a symplectic $C^1$ twist map of the annulus that has a non differentiable essential invariant curve $\Gamma$ such that the restriction of $g$ to $\Gamma$ is minimal. 
\end{theo}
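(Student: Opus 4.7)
The strategy is to construct $g$ via its generating function $S \in C^2(\R^2)$, which is equivalent to producing a $C^1$ symplectic twist map with twist condition $\partial^2_{12} S \ne 0$. The Birkhoff invariant graph with restricted dynamics $\phi:\T\to\T$ reads $\gamma(\theta) = -\partial_1 S(\theta, \phi(\theta))$: the composition of a $C^1$ function with a merely Lipschitz $\phi$, hence automatically Lipschitz, and non-$C^1$ precisely at the points where $\phi$ fails to be $C^1$ (provided $\partial^2_{12} S$ is nonzero there). Thus if I can arrange $\phi$ to be minimal, bi-Lipschitz, but not $C^1$ on a dense set, then $\Gamma$ will automatically carry the desired properties. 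Note that Birkhoff already forces $\gamma$ to be Lipschitz, so no extra effort goes into the Lipschitz bound itself.

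First, I would fix a Diophantine $\alpha$ and construct a bi-Lipschitz homeomorphism $h:\T\to\T$ which is $C^\infty$ off one orbit $\cO$ of $R_\alpha$, with carefully prescribed Lipschitz corners at each point of $\cO$. Setting $\phi:=h^{-1}\circ R_\alpha \circ h$ yields a minimal bi-Lipschitz homeomorphism of $\T$, topologically conjugate to $R_\alpha$, and non-$C^1$ precisely on the dense set $h^{-1}(\cO)$. Then I would seek $S$ of the form
$$S(\theta,\theta') = \tfrac{1}{2}(\theta'-\theta-\alpha)^2 + V(\theta,\theta'),$$
with $V\in C^2(\R^2)$ small, periodic in the annular sense, and with $|\partial^2_{12} V|$ small (so the twist condition $\partial^2_{12} S \neq 0$ is preserved). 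The requirement that the graph be invariant with dynamics $\phi$ becomes the Euler--Lagrange equation
$$\partial_2 V(\phi^{-1}(\theta),\theta) + \partial_1 V(\theta,\phi(\theta)) = \phi(\theta)+\phi^{-1}(\theta)-2\theta, \qquad \theta\in\T,$$
to be satisfied along the Lipschitz graph $\{\theta'=\phi(\theta)\}$, and the invariant curve then reads $\gamma(\theta)=\phi(\theta)-\theta-\alpha-\partial_1 V(\theta,\phi(\theta))$.

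The heart of the construction, and the main obstacle, is the Whitney-type extension: given a prescribed Lipschitz $1$-jet on the Lipschitz graph of $\phi$, produce an honest $V \in C^2(\R^2)$ realizing it, with periodicity and with $|\partial^2_{12} V| < 1$. The subtlety is that the tangential derivative of the $1$-jet along the graph involves $\phi'$, which has jumps on the dense set $h^{-1}(\cO)$, while $\partial^2_{12} V$ must remain continuous; this forces a compatibility relation on the jumps of $\phi'$ along the orbit $\{\phi^n(\theta_0)\}$ of the singular point, akin to a cohomological equation along $\phi$. For a summable and appropriately tuned choice of the corner amplitudes in the definition of $h$, and using the Diophantine property of $\alpha$, this compatibility can be solved and $V$ built by averaging the prescribed data in the normal direction against a smooth bump function, smallness coming from the smallness of the corners. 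The resulting $g$ is then $C^1$, symplectic, with the twist property, and its invariant curve $\Gamma$ is a Lipschitz graph non-differentiable on the dense set $h^{-1}(\cO)$, with minimal restricted dynamics $\phi$ conjugate to $R_\alpha$.
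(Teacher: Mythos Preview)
Your generating-function framework is sound and in fact contains the paper's approach as the special case $V=V(\theta)$: then $\partial_2 V\equiv 0$ and your Euler--Lagrange relation becomes $V'(\theta)=\phi(\theta)+\phi^{-1}(\theta)-2\theta$, so that $V\in C^2$ is exactly the condition that $\phi+\phi^{-1}$ be $C^1$. This is Herman's trick, and the paper reduces Theorem~\ref{main} precisely to producing a minimal non-differentiable circle homeomorphism $f$ with $f+f^{-1}\in C^1$ (Theorem~\ref{maincercle}). The bulk of the paper is then devoted to \emph{constructing} such an $f$, via the notion of $C$-great diffeomorphisms and a delicate inductive limiting argument; that construction is where all the work lies.

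Your proposal instead fixes an explicit $\phi=h^{-1}R_\alpha h$ with corners on a dense orbit and hopes a genuinely two-variable $V\in C^2$ will absorb the failure of $\phi+\phi^{-1}$ to be $C^1$. The gap is that the ``Whitney-type extension'' you invoke is the entire difficulty, not a routine step. Differentiating the Euler--Lagrange identity at a corner $x_n$ and taking jumps forces
\[
\bigl(1-\partial^2_{12}V(x_n,\phi(x_n))\bigr)\,[\phi'(x_n)]
\;+\;\bigl(1-\partial^2_{12}V(\phi^{-1}(x_n),x_n)\bigr)\,[(\phi^{-1})'(x_n)]\;=\;0,
\]
a multiplicative recursion along the orbit for $w_n:=1-\partial^2_{12}V(x_n,\phi(x_n))$. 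Preserving the twist condition requires $w_n$ to stay near~$1$, hence the infinite products $\prod_k\bigl(-[(\phi^{-1})'(x_k)]/[\phi'(x_k)]\bigr)$ must remain bounded near~$1$ in both directions --- a nontrivial constraint on the corner amplitudes which you have not verified for any concrete $h$. Even granting this, you have only matched the \emph{jumps} of first derivatives along the orbit; producing a global $C^2$ function with prescribed mixed partial on a merely Lipschitz curve while simultaneously satisfying the full functional equation and periodicity is not covered by any standard Whitney theorem. The appeal to a ``cohomological equation'' and to the Diophantine hypothesis is unsubstantiated (no equation is written down, and the paper's construction in fact works for every irrational $\alpha$). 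In short, you have correctly located the analytic obstruction but assumed its resolution; the paper sidesteps the two-variable extension entirely by engineering $\phi$ so that $\phi+\phi^{-1}$ is already $C^1$, and that engineering is the substance of the proof.
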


Due to \cite[Theorem 2]{mc2} we can deduce the following

\begin{coro} \label{main2} There exists a symplectic $C^1$ twist map $g$ of the annulus that has a non differentiable essential invariant curve $\Gamma$ such that the restriction of $g$ to $\Gamma$ is minimal and such that $\Gamma$ is at the boundary of an instability zone of $g$. 
\end{coro}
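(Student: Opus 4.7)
The plan is to deduce Corollary \ref{main2} essentially as a black-box consequence of Theorem \ref{main} combined with the perturbation statement \cite[Theorem 2]{mc2}; no further dynamical construction is required on our side.

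First, I apply Theorem \ref{main} to produce a symplectic $C^1$ twist map $g_0$ of $\A$ admitting a non-differentiable essential invariant curve $\Gamma$ on which $g_0|_\Gamma$ is minimal. Since $g_0|_\Gamma$ is minimal and $\Gamma$ is an invariant essential circle, the rotation number of $g_0|_\Gamma$ is necessarily irrational and $g_0|_\Gamma$ is topologically conjugate to an irrational rotation; in particular no point of $\Gamma$ is periodic, which is the regime in which \cite[Theorem 2]{mc2} operates (just as in \cite{mc3} where the same type of perturbation was performed around Denjoy-type curves).

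Next, I feed the pair $(g_0,\Gamma)$ into \cite[Theorem 2]{mc2}. That result provides, in an arbitrarily small $C^1$ neighborhood of $g_0$, a symplectic $C^1$ twist map $g$ that still preserves $\Gamma$, coincides with $g_0$ on $\Gamma$, and for which $\Gamma$ is now the boundary of a Birkhoff instability zone. The three properties demanded by the statement are then all satisfied simultaneously: the curve $\Gamma\subset\A$ is unchanged, hence remains non-differentiable; the restriction $g|_\Gamma=g_0|_\Gamma$ is still minimal; and by construction $\Gamma$ lies on the boundary of an instability zone of $g$.

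The only genuine verification is that the hypotheses of \cite[Theorem 2]{mc2} are met by the object produced in Theorem \ref{main}. These hypotheses concern the nature of the dynamics on the invariant curve (irrational rotation number, absence of periodic points, minimality) together with $C^1$ regularity of the ambient twist map, all of which are built into the conclusion of Theorem \ref{main}. I do not foresee any obstacle beyond checking that the regularity class in \cite[Theorem 2]{mc2} is $C^1$-symplectic twist maps, which is indeed the setting announced in \cite{mc2}; otherwise one would need to quote the $C^1$ version of the perturbation mechanism of \cite{mc3}, but the resulting chain of implications is identical.
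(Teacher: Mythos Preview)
Your proposal is correct and follows exactly the same route as the paper: apply Theorem \ref{main} to obtain the pair $(g_0,\Gamma)$, note that minimality forces the rotation number on $\Gamma$ to be irrational, and then invoke \cite[Theorem 2]{mc2} to perturb in $C^1$ to a symplectic twist map $g$ for which $\Gamma$ bounds an instability zone while the restricted dynamics is unchanged. The paper adds only the remark that the perturbation mechanism of \cite{mc2} rests on Hayashi's $C^1$ closing lemma.
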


The derivation of Corollary \ref{main2} from Theorem \ref{main} follows from the general result of \cite{mc2}, Theorem 2 asserting that any essential invariant curve of a $C^1$ twist map $g$ that has an irrational rotation number, can be viewed as a boundary dynamics of Birkhoff instability zone of an arbitrarily nearby $C^1$ twist map. This result
 relies on a perturbation argument involving the Hayashi $C^1$-closing lemma. 
\medskip

Due to a construction by Herman in \cite{herman_asterisque}, Theorem \ref{main} can be derived from the following result on circle homeomorphisms.
\begin{theo} \label{maincercle} There exists a non-differentiable orientation preserving minimal homeomorphism of the circle $f$ such that $f+f^{-1}$ is of class $C^1$. 
\end{theo}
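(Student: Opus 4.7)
My plan is to construct $f$ as a topological conjugate of an irrational rotation, $f = h \circ R_\alpha \circ h^{-1}$, so that minimality of $f$ is automatic (a circle homeomorphism conjugate to an irrational rotation is minimal). The task then reduces to constructing a non-smooth homeomorphism $h:\T \to \T$ such that $F(x) := f(x) + f^{-1}(x) \in C^1(\T)$. Using $y = h^{-1}(x)$ and writing $G(z) := F(z) - 2z$, this is equivalent to the functional equation
\[ (\Delta_\alpha^2 h)(y) := h(y+\alpha) + h(y-\alpha) - 2h(y) = G(h(y)), \]
with $G \in C^1(\T)$ to be prescribed.

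I would first choose $\alpha$ Liouville, with convergents $p_n/q_n$ such that $|q_n\alpha - p_n|$ decays as fast as the small-divisor analysis requires. Linearising at $h_0 = \id$, $G_0 = 0$, and writing $h = \id + \varphi$ with $\varphi$ Lipschitz of small norm (so that $h$ remains a homeomorphism), the leading equation becomes $\Delta_\alpha^2 \varphi = G$, which in Fourier reads $-4\sin^2(\pi k \alpha)\,\widehat{\varphi}(k) = \widehat{G}(k)$. The small-divisor mechanism at $k = q_n$ lets me prescribe $\widehat G$ supported on $\{q_n\}$ with $|\widehat G(q_n)| \asymp \sin^2(\pi q_n \alpha)\cdot |\widehat\varphi(q_n)|$ as small as desired (so $G \in C^1$), while $\widehat\varphi$ is chosen in Weierstrass/Hardy fashion so that $\varphi$ is continuous, Lipschitz with constant less than $1$, but has at least one prescribed point of non-differentiability. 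This delivers the required singular behaviour for $f$ at the linearised level.

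Second, I would bootstrap this linearised construction to an exact solution by a Nash--Moser / KAM iteration. At each step $n$, I would solve a linearised cohomological equation of the form $\Delta_\alpha^2 \delta_n - G'(y)\,\delta_n(y) = R_n(y)$, where $R_n$ is the current $C^0$-small residue from the previous step. The Liouville property of $\alpha$ makes this equation solvable in $C^0$ despite the loss of derivatives incurred in the small-divisor inversion. To prevent the corrections from healing the singularity of $\varphi$, I would arrange each $\delta_n$ to have Fourier support at frequencies well separated from those producing the singular behaviour, so that the cumulative correction is smooth near the distinguished point while still cancelling the non-$C^1$ portion of $F - 2\id$ at each stage.

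The main obstacle is precisely this balancing act. The iteration must converge in $C^0$ to a continuous strictly increasing map $h$ (yielding $f$ minimal by conjugacy), the corrections must preserve the non-differentiability of $\varphi$ at the chosen point, and the limiting $F$ must be genuinely in $C^1$ rather than only approximately so. Absorbing the derivative losses in the Nash--Moser step requires $\alpha$ to be Liouville enough, while preserving the singularity requires sufficient frequency separation between $\varphi$ and the corrections; making these two requirements compatible, and verifying rigorously that the limit $h$ is a homeomorphism with $F \in C^1$, is the principal technical challenge.
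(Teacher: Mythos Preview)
Your outline is not a proof, and it faces an obstruction you have not resolved. The identity $G(x)=(\Delta_\alpha^2\varphi)(h^{-1}(x))$ is \emph{exact}, not merely linearised: once $\varphi$ (hence $h$) is chosen, $G$ is completely determined, so there is nothing for a Nash--Moser scheme to iterate towards. If instead you fix $G\in C^1$ and solve for $\varphi$, you must invert $\Delta_\alpha^2$, whose eigenvalues $-4\sin^2(\pi k\alpha)$ admit no polynomial lower bound when $\alpha$ is Liouville; the loss of derivatives is then not tame and the standard machinery does not run --- Liouville frequencies make cohomological equations \emph{harder}, not ``solvable in $C^0$''. Even setting that aside, the ``frequency separation'' meant to protect the singularity cannot survive the iteration, since the nonlinear term $G(y+\varphi(y))$ spreads Fourier support over all modes. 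More directly, look at the exact formula: if $\varphi$ is non-differentiable only at an isolated point $y_0$, then $\Delta_\alpha^2\varphi$ is non-differentiable at $y_0$ \emph{and} at $y_0\pm\alpha$, while $h^{-1}$ is smooth at $h(y_0\pm\alpha)$; hence $G=(\Delta_\alpha^2\varphi)\circ h^{-1}$ inherits those singularities and is not $C^1$. If instead $\varphi$ is nowhere differentiable, then so is $h^{-1}$, and $(\Delta_\alpha^2\varphi)\circ h^{-1}\in C^1$ forces $(\Delta_\alpha^2\varphi)'\equiv 0$, impossible for irrational $\alpha$ and nonconstant $\varphi$. Your scheme offers no mechanism to escape this dichotomy.

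The paper avoids Fourier analysis and small divisors entirely. It first constructs a $C^1$ diffeomorphism $f$ (``$C$-great'') possessing an uncountable set of points $x$ at which $\sum_{n\ge 0}|Df^n(x)|^{-2}<C$ while $\sum_{n<0}|Df^n(x)|^{-2}=\infty$. This forward/backward asymmetry is precisely what allows one to design, by hand, a conjugacy $h$ supported on finitely many small orbit intervals so that $g=h f h^{-1}$ has $g+g^{-1}$ arbitrarily $C^1$-close to $f+f^{-1}$ while a difference quotient $\Delta(g,x,u,v)$ is displaced by a definite amount $\epsilon_0>0$ depending only on $C$, and so that $g$ is again $C$-great near $x$. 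Iterating, with the conjugacies converging only in $C^0$, produces a minimal limit $F$ with $F+F^{-1}\in C^1$ but $F$ non-differentiable at the accumulation point of the $x_n$.
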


\bg \begin{proof}[Proof of Theorem \ref{main}] \bk To prove that Theorem \ref{maincercle} implies Theorem \ref{main}, we observe following \cite{herman_asterisque}'s beautiful trick  that if $\tilde{f}$ is a lift of a circle homeomorphism $f(\cdot)=\cdot+\psi(\cdot)$ and if $\phi(\cdot):= \frac{1}{2} (\tilde{f}(\cdot)+\tilde{f}^{-1}(\cdot))$ is a $C^1$ function then the map 
$$g: \A \to \A : (\theta,r) \mapsto (\theta+r,r+ \phi(\theta+r)),$$
 is a $C^1$ twist map of the annulus that has as invariant curve the graph $\Gamma$ of $\theta \mapsto \psi(\theta)$, and such that the dynamics of $g$ restricted to $\Gamma$ is conjugated (via the first projection) to  $f$. \bg \end{proof} \color{black}

To prove Theorem \ref{maincercle}, we will work with a special class of circle diffeomorphisms that we call $C$-great. 

\begin{define}{\it  Let $f:\R/\Z \to \R/\Z$ be a $C^1$ diffeomorphism.  For $C>1$,
we say that $x$ is $C$-good
if $\sum_{n \geq 0} |Df^n(x)|^{-2}<C$, while $\sup_{n<0}
|Df^n(x)|=\infty$ and $\sum_{n<0} |Df^n(x)|^{-2}=\infty$.

We say that $f$ is $C$-great if the set of $C$-good points is uncountable
and $\sup_x |Df(x)+Df^{-1}(x)|<C$.}
\end{define}

{ Moreover, we say that $x$ is $C$-great if it is accumulated by an uncountable set of $C$-good points. We then say that the pair $(f,x)$ is $C$-great.

It is not hard to see that if $f$ is $C$-great then it has a $C$-great point $x$. Moreover, a $C$-great point is actually accumulated by an uncountable set of $C$-great points. 
 
We will see later that $C$-great diffeomorphisms exist with arbitrary irrational rotation number. 

Let $0<u<v$. For an injective map $f$ defined from $[x-v,x+v]$ to $\R$, we define $\Delta(f,x,u,v)=\frac{v}{u} \frac{f(x+u)-f(x)}{f(x+v)-f(x)}$. If $f$ is a differentiable circle diffeomorphism then it must satisfy for every $x \in \T$ that the limit as $u,v$ go to $0$ of $\Delta(f,x,u,v)$ exists and equals $1$. }

We now see how starting from a $C$-great diffeomorphism we can prove Theorem \ref{maincercle}. The proof is based on an inductive application of the following lemma.  
 
\begin{lemma}
\label{lemma1}
For every $C>1$ there exists $\epsilon_0>0$ with the following property.
Let $(f,x)$ be $C$-great.
Then for every $\epsilon>0$ there exists a
$C^1$-diffeomorphism $h:\R/\Z \to \R/\Z$ such that 
$g=h \circ f \circ h^{-1}$ satisfies :
\begin{itemize}
\item $h$
is $\epsilon$-close to ${\rm Id}$ in the $C^0$ topology,
\item  $g+g^{-1}$ is
$\epsilon$-close to $f+f^{-1}$ in the $C^1$ topology, 
\item {there exists $y\in \T$ such that $|y-x|<\eps$ and $(g,y)$ is $C$-great, and there exists $u,v \leq \eps$ such that $\Delta(g,y,u,v)>1+\eps_0$.}
\end{itemize}

\end{lemma}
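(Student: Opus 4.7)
My plan is to let $\eta = h - \mathrm{Id}$ be a sum $\sum_{k=0}^N \eta_k$ of smooth bumps supported in disjoint neighborhoods of the forward iterates $f^k(x_0)$ of a $C$-good point $x_0$ with $|x_0 - x| < \eps/2$, chosen from the uncountable accumulation set provided by $C$-greatness of $x$. A first-order expansion of $g = h f h^{-1}$ yields
\[
(g + g^{-1} - f - f^{-1})(y) = \eta(f(y)) + \eta(f^{-1}(y)) - (f'(y) + (f^{-1})'(y))\,\eta(y) + O(\eta^2).
\]
Writing $\eta_k(z) = A_k\,\beta((z - f^k(x_0))/R_k)$ for a fixed smooth profile $\beta$ with $\beta(0) = 0$, and choosing $R_k := R_0\,Df^k(x_0)$ so that $f$ maps $\operatorname{supp}\eta_k$ onto $\operatorname{supp}\eta_{k+1}$, the first-order residual near $f^k(x_0)$ reduces to $(A_{k+1} + A_{k-1} - \alpha_k A_k)\,\beta(\cdot/R_k)$ with $\alpha_k := f'(f^k(x_0)) + (f^{-1})'(f^k(x_0))$. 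I would impose the three-term recurrence $A_{k+1} = \alpha_k A_k - A_{k-1}$, $A_{-1} = 0$, which annihilates this residual at every interior step; the same identity $A_{k+1} + A_{k-1} = \alpha_k A_k$ also kills the leading $\beta\beta'$ contribution appearing at second order in $\eta$.

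The $C$-good hypothesis is what controls the amplitudes. Setting $B_k := A_k / Df^k(x_0)$, the recurrence rewrites as $B_{k+1} - B_k = (B_k - B_{k-1})/(\lambda_{k-1}\lambda_k)$ with $\lambda_j := f'(f^j(x_0))$, and iteration gives $B_{k+1} - B_k = (B_1 - B_0)\,\lambda_0/(Df^k(x_0)\,Df^{k+1}(x_0))$. Cauchy--Schwarz combined with $\sum_n |Df^n(x_0)|^{-2} < C$ then yields $\sum_k |B_{k+1} - B_k| \le C'A_0$, so $B_k$ stays uniformly bounded by $C''A_0$ with $C''$ depending only on $C$, giving $A_k \le C''A_0\,Df^k(x_0)$. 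Consequently $|\eta_k|_{C^1}\sim A_0/R_0$ is uniformly bounded in $k$, while $|\eta_k|_{C^0}$ peaks at $k=N$ with height $\sim C''A_0\,Df^N(x_0)$; setting $A_0\sim\eps/Df^N(x_0)$ and $R_0\sim\eps/Df^N(x_0)$ keeps $\|h-\mathrm{Id}\|_{C^0}\le\eps$ and makes $A_0/R_0$ of order $1/C''$. The $C^1$-smallness of $g+g^{-1}-(f+f^{-1})$ then follows by combining the first-order cancellation, a careful bound on the second-order residual (whose leading $\beta\beta'$ part also vanishes by the recurrence, and whose remaining $\beta^2$ part is controlled by the uniform bound on $B_k$), and a smooth truncation of $A_k$ near $k=N$ designed so that only second differences of $A_k$ survive in the boundary residual.

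For the $\Delta$-bound at $y=x_0$, direct computation using the recurrence gives $g(x_0+t)-g(x_0)\approx \lambda_0 t+(A_0/\lambda_{-1})\,\beta(t/R_0)$ with $\lambda_{-1}:=f'(f^{-1}(x_0))$, whence
\[
\Delta(g,x_0,u,v)\approx\frac{\lambda_0 R_0+(A_0/\lambda_{-1})\,\kappa(u/R_0)}{\lambda_0 R_0+(A_0/\lambda_{-1})\,\kappa(v/R_0)},\qquad\kappa(s):=\beta(s)/s.
\]
With $\beta$ chosen so that $\kappa$ takes substantially different values at two scales $s_1,s_2=O(1)$, and $A_0/R_0$ of order $1/C''$, one obtains $\Delta(g,x_0,s_1R_0,s_2R_0)\ge 1+\eps_0$ for a constant $\eps_0>0$ depending only on $C$ and on the profile $\beta$; the scales $u=s_1R_0,\,v=s_2R_0$ are both $\le\eps$ because $R_0\le\eps$. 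That $(g,y)$ is $C$-great for some $y$ with $|y-x|<\eps$ follows from $h$ being the identity outside a set of small measure, so that generic $C$-good points of $f$ near $x$ remain $C$-good for $g$. I expect the main obstacle to be the simultaneous control of the three pieces of the $C^1$-residual --- first-order (handled by the recurrence), second-order (also telescoping thanks to the same recurrence, but requiring careful bookkeeping of the $\beta^2$ contributions), and the boundary truncation (handled by a smooth cutoff) --- where the $\ell^2$-summability from the $C$-good hypothesis is exactly the compatibility condition that makes $\eps_0$ a fixed constant independent of $\eps$.
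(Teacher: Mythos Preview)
Your construction has a genuine gap at the \emph{left} boundary of the chain of bumps. With $\eta=\sum_{k=0}^N\eta_k$ the first-order residual $\eta(f(y))+\eta(f^{-1}(y))-\alpha(y)\eta(y)$ is supported not only near $f^0(x_0),\dots,f^N(x_0)$ but also near $f^{-1}(x_0)$ and $f^{N+1}(x_0)$. Your recurrence with $A_{-1}=0$ kills the residual at $k=0$, but at $y$ close to $f^{-1}(x_0)$ one has $\eta(y)=\eta(f^{-1}(y))=0$ while $\eta(f(y))=\eta_0(f(y))=A_0\,\beta(\lambda_{-1}(y-f^{-1}(x_0))/R_0)$, whose $C^1$ norm is of order $A_0\lambda_{-1}/R_0$. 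Since you need $A_0/R_0$ bounded below (of order $1/C''$) to produce the $\Delta$-oscillation at $x_0$, this left boundary term is of the \emph{same} order as $\eps_0$ and cannot be made smaller than $\eps$. A smooth truncation near $k=0$ would kill the $\Delta$-bump; shifting the observation point to some $f^{k^*}(x_0)$ with $k^*\ge M$ does not help either, since there the coefficient $A_{k^*+1}-\lambda_{k^*}A_{k^*}=\lambda_{k^*}Df^{k^*}(x_0)(B_{k^*+1}-B_{k^*})$ is of order $1/Df^{k^*}(x_0)$ after division by $R_{k^*}$, hence far too small for a uniform $\eps_0$.

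This is exactly where the paper uses the \emph{backward} half of the $C$-good condition, which your argument never touches. The paper places bumps along $f^j(x)$ for $-N'\le j\le N$ and lets the amplitudes (their $e_j$) grow as $j$ decreases, following the natural recurrence. Because $\sum_{j\le 0}c_j=\infty$ (equivalent to $\sum_{n<0}|Df^n(x)|^{-2}=\infty$) one can, in a second zone $-N'\le j<-N$, rescale the increments by a small factor $1/\alpha$ and still bring $e_j$ back to $0$ at $j=-N'$; because $\liminf_{j\to-\infty}c_j=0$ (coming from $\sup_{n<0}|Df^n|=\infty$) the transition points $-N$ and $-N'$ can be chosen so that the three boundary residuals are arbitrarily small. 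Your use of only the forward summability $\sum_{n\ge 0}|Df^n(x_0)|^{-2}<C$ controls the amplitudes going forward but gives no mechanism to close off the chain on the left while preserving the size of the bump at $k=0$. A secondary point: the paper first conjugates by a preliminary $h'$ making $f$ \emph{exactly affine} on the small intervals $I'_j$, so the computation of $D(g+g^{-1})-D(f+f^{-1})$ on each $I'_j$ is exact and no second-order bookkeeping is needed; your $\beta\beta'$ and $\beta^2$ discussion is then unnecessary once the backward extension is in place.
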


\bg \begin{proof}[Proof that Lemma \ref{lemma1} implies Theorem \ref{maincercle}]  \bk We start with a minimal diffeomorphism $f_0$ and $x_0$ such that $(f_0,x_0)$ is $C$-great. We construct inductively  sequences $h_n$ and $x_n$ and $u_n$ and $v_n$, such that $u_n$ and $v_n$ converge fast to $0$ and $x_n$ converges fast to some limit point $\bar{x}$ and $h_n$ converges fast to ${\rm Id}$ in the $C^0$ topology. Hence $h_n \circ \ldots \circ h_1$ converges to a circle hoemomorphism $H$ and $f_n$ converges to $F=H \circ f_0 \circ H^{-1}$, hence $F$ is minimal. By the second property of Lemma \ref{lemma1}, we can ask that  $f_n+f_n^{-1}$ converges fast in the $C^1$-topology to a $C^1$ diffeomorphism, that must be $F+F^{-1}$. Moreover $\Delta(f_n,x_n,u_n,v_n)>1+\eps_0$ and by the fast convergence of $(f_n,x_n)$ to $(f,\bar{x})$ we guarantee that $\Delta(F,\bar{x},u_n,v_n)>1+\eps_0$ for every $n$. Hence $F$ is not differentiable.  
\bg \end{proof} \color{black}

\bg \begin{proof}[Proof that Lemma \ref{lemma1}] \bk 

To motivate what follows, note that if $g=h \circ f \circ h^{-1}$ with
$Dh(x)=1+e(x)$
then in order to have $g+g^{-1}=f+f^{-1}$ one must have $h \circ f+h \circ
f^{-1}=f \circ h+f^{-1} \circ h$ so that
$De(f(x)) Df(x)+\frac {De(f^{-1}(x))}
{Df(f^{-1}(x))}=e(x) (Df(h(x))+\frac {1} {Df(f^{-1}(h(x))})$. 
Assuming $h$ is $C^0$ close to $\id$, we get that
$(e(f(x))-e(x)) Df(x) Df(f^{-1}(x))$ is close to
$e(x)-e(f^{-1}(x))$.

Let $x$ be $C$-good.  Let $b_j=Df(f^j(x))$, $j \in \Z$.  Define $c_j$, $j
\in \Z$ by $c_0=1$ and $c_j b_j b_{j-1}=c_{j-1}$.

We note that $\sum_{j \geq 0} c_j$ is bounded by a constant $C'$
depending only
on $C$, and $\sum_{j \leq 0} c_j=\infty$, $\liminf_{j \leq 0} c_j=0$.

Let us now define $e_j$,
$j \in \Z$ as follows.  Fix $N$ large such that
$c_{-N}$ is small.  Note that $c_N$ is also small since $N$ is large.
We let $e_j=0$ for $j>N$.  For $|j| \leq N$, we define
$e_j$ so that $e_{j+1}-e_j=-c_j$.  Particularly
$-e_0=e_{N+1}-e_0=-\sum_{0 \leq j
\leq N} c_j$, so $e_0<C'$.  We now let
$N'$ be much larger than $N$ such that $c_{-N'}$ is small and such that
$\alpha=\frac {\sum_{-N' \leq j <-N} c_j} {e_{-N}}$ is large.  We set then
$e_{j+1}-e_j=\frac {c_j} {\alpha}$
for $-N' \leq j \leq -N-1$.  Particularly,
$e_{-N}-e_{-N'}=\frac {1} {\alpha} \sum_{-N' \leq j<-N} c_j=e_{-N}$, so
$e_{-N'}=0$.  We now define $e_j=0$ for $j<-N'$.
Note that $e_j \geq 0$ for all $j$.  Note that by construction,
$(e_{j+1}-e_j) b_j b_{j-1}+(e_{j-1}-e_j)$ is always small and it is non-zero
only for $j=-N'$, $j=-N$ and $j=N+1$.

Choose a small interval $I_0$
centered around $x$, and let
$I_j=f^j(I_0)$, $j \in \Z$.  We may assume that $I_j \cap I_{j'}=\emptyset$
if $0<|j-j'| \leq 2 N'$.  Then for every $\delta>0$ we can
define a diffeomorphism $h':\R/\Z \to \R/\Z$, which is the identity outside
$\cup_{-N'+1 \leq j \leq N+1} I_j$, such that
$\sup_y |Dh'(y)-1|<\delta$, and such that letting
$f'=h' \circ f \circ h'^{-1}$ we have $f'(y)=f^{j+1}(x)+b_j(y-f^j(x))$
whenever $y$ is near $f^j(x)$ and $-N' \leq j \leq N$.

Let us now select an interval $I'_0 \subset I_0$ centered around $x$ such
that letting $I'_j=f'^j(I'_0)$, $j \in \Z$, we have that $f'|I'_j$ is
affine for $-N' \leq j \leq N$.

We will now define a diffeomorphism $h:\R/\Z \to \R/\Z$ which is the
identity outside $\cup_{-N'+1 \leq j \leq N} I'_j$ as follows.
In order to
specify $h$ it is enough to define $Dh$ on $I'_j$ for $-N'+1 \leq j \leq N$.
We let $\phi:\R \to [-\delta,1]$ be a smooth function supported on
$(-1/2,1/2)$,
symmetric around $0$ and such that $\phi(0)=1$ and $\int \phi(x) dx=0$.  We
then let $Dh=1+e_j
\phi \circ A_j$ where $A_j:I'_j \to [-1/2,1/2]$ is an affine
homeomorphism.

Let $g=h \circ f' \circ h^{-1}$.  Note that $F'=f'+f'^{-1}$ is $C^1$ close to
$F=f+f^{-1}$.  Let us show that $G=g+g^{-1}$ is $C^1$ close to $F'$. 
Note that $G=F'$ in the complement of $\cup_{-N' \leq j \leq N} I'_j$,
so it is enough to show that $DG-DF'$ is small in each $I'_j$, $-N' \leq j
\leq N$.  Indeed for $y \in I'_j$, letting $\kappa=\phi \circ A_j(y)$,
we have $DG(h(y))-DF'(h(y))=\frac {\kappa} {(1+e_j \kappa) b_{j-1}}
((e_{j+1}-e_j) b_j b_{j-1}+e_{j-1}-e_j)$, which is small since the term
$\frac {\kappa} {(1+e_j \kappa) b_{j-1}}$ is bounded (as $-\delta \leq
\kappa \leq 1$, $b_{j-1}>C^{-1}$ and $1+e_j \kappa \geq 1-e_j \delta \geq
1/2$ provided $\delta$ is chosen sufficiently small).

Moreover, we have $Dg(x)-Df(x)=\frac {(1+e_1) b_0} {1+e_0}-b_0=\frac {-b_0}
{1+e_0}<-\epsilon_0$ with $\epsilon_0=\frac{1}{C (1+C')}$. {Since $\frac{g(x+u)-g(x)}{u} \sim Dg(x)$ for $u \ll |I_0|$, while $\frac{g(x+v)-g(x)}{v} \sim Df(x)$ for $1\gg u \gg |I_0|$, this allows to exhibit $u$ and $v$ such that $\Delta(g,x,u,v) \geq c \eps_0$ (recall that   $|\frac{g(x+v)-g(x)}{v}|$ is bounded by the {\it a priori} Lipschitz Birkhoff control).

To conclude, we must show that there exists arbitrary close to $x$ a point $y$ such that  $(g,y)$ is $C$-great. It suffices for this to show that in any interval $J$ around $x$ there is an uncountable set of $C$-good points for $g$. By definition of $x$, we know that the latter is true for $f$. } Note that if $y$ is $C$-good for $f$ then $y'=h(h'(y))$ is
$C$-good for $g$ if $\sum_{n \geq 0} |Dg^n(y')|^{-2}<C$.
Fix some $\Lambda>0$ much larger than $\sup e_j$.
Notice that for small $\lambda>0$, we can choose $m>0$ such
that there is an uncountable set $K' \subset J$ of $y$ such that $\sum_{n \geq 0}
|Df^n(y)|^{-2}<C-\lambda$ and $\sum_{n \geq m}
|Df^n(y)|^{-2}<\lambda/\Lambda$.  Now, notice that $h \circ h'=\id$ except
in $\cup_{-N'+1 \leq j \leq N+1} I_j$, and the derivative of
$h \circ h'$ is
bounded by $(1+\delta) (1+\sup e_j)$.  In particular, if $y \in K'$ then
$h(h'(y))$ will be
$C$-good for $g$ provided
$g^n(y) \notin \cup_{-N'+1 \leq j \leq N+1} I_j$ for
$0 \leq n \leq m$.  For any given $y$ which is not in the $f$-orbit of $x$
(a countable set), this happens provided the size of $I_0$ is chosen
sufficiently small, so for small enough $I_0$ we must have uncountably many
$y \in K' \subset J$ satisfying this condition.
\bg \end{proof} \color{black} 

\bigskip 

\begin{lemma}

For every $\alpha \in \R \setminus \Q$, there exist $C>1$ and
$f$ minimal with rotation number $\alpha$ such that $f$ is $C$-great.

\end{lemma}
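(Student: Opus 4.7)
The plan is to construct $f$ by the Anosov--Katok method of successive conjugation of rigid rotations, producing a $C^1$ minimal diffeomorphism with the prescribed rotation number and carrying the required hyperbolic-type structure on an uncountable set. Fix $\alpha \in \R \setminus \Q$ and a sequence of rationals $\alpha_n = p_n/q_n \to \alpha$ from the continued fraction expansion. We build smooth diffeomorphisms $H_n$ of $\T$ with $H_{n+1} = H_n \circ h_{n+1}$, and set $f_n = H_n \circ R_{\alpha_n} \circ H_n^{-1}$. With $\|h_{n+1} - \mathrm{id}\|_{C^1}$ decaying sufficiently fast (relative to the $C^1$ norm of $H_n$), $H_n$ converges in $C^0$ to a homeomorphism $H$ while $f_n$ converges in $C^1$ to a limit $f = H \circ R_\alpha \circ H^{-1}$, topologically conjugate to $R_\alpha$ and therefore minimal with rotation number $\alpha$.

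The inductive perturbation $h_{n+1}$ is supported in a tiny interval $J_{n+1}$ chosen so that $J_{n+1}$ and its first $q_{n+1}$ iterates under $R_{\alpha_{n+1}}$ are pairwise disjoint. Inside $J_{n+1}$, we install a symmetric ``derivative spike'' whose shape boosts $Df_{n+1}$ at one specific point by a fixed factor $\lambda > 1$ while simultaneously making $Df_{n+1}^{-1}$ small at the same point, so that $\sup |Df_{n+1} + Df_{n+1}^{-1}| < C$ is maintained uniformly. Stringing together such spikes at each scale yields, for any $x$ whose forward $f$-orbit passes through each $J_m$ at least once, a forward derivative product growing at least geometrically, and hence a finite sum $\sum_{k \geq 0} |Df^k(x)|^{-2}$ bounded by any preassigned $C$ after tuning $\lambda$.

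The set of such points is a Cantor-like intersection $K = \bigcap_n K_n$, where $K_n$ collects points whose first $q_n$ forward iterates meet $J_1, \ldots, J_n$ in the prescribed fashion. Because at each step only countably many orbits must be excluded (those that avoid the next installed spike or that coincide with already-special orbits), and the rotational combinatorics leave an abundance of admissible trajectories, $K$ is uncountable. The two backward conditions should come almost for free from the fact that the unique invariant probability measure $\mu$ of the $C^1$ minimal circle diffeomorphism $f$ satisfies $\int \log Df \, d\mu = 0$: a forward-contracting orbit whose log-derivative sums to $-\infty$ must compensate by visiting expanding regions in its backward trajectory, which, together with the bound $\sup|Df + Df^{-1}| < C$, forces both $\sup_{k<0} |Df^k(x)| = \infty$ and $\sum_{k<0} |Df^k(x)|^{-2} = \infty$ via a pigeonhole argument.

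The main obstacle is the simultaneous quantitative tuning of spike heights and support sizes to maintain, at every scale, (i) $C^1$ convergence of $f_n$ with exact rotation number $\alpha$ at the limit, (ii) the uniform bound $\sup|Df_n + Df_n^{-1}| < C$ independent of $n$, and (iii) uncountability of the surviving set $K$. The rapid growth of $q_n$ and the symmetric design of the spikes should give enough flexibility to coordinate these demands, but the delicate point is to arrange that spikes installed at deep scales do not spoil the summability achieved at shallower scales, which is where the bulk of the technical work will lie.
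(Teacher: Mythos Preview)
Your proposal has two genuine gaps.

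\medskip

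\textbf{First gap: the Anosov--Katok scheme does not reach every irrational $\alpha$.}  You conjugate $R_{\alpha_n}$ with $\alpha_n=p_n/q_n$ and pass to the limit.  For $f_n=H_n R_{\alpha_n}H_n^{-1}$ to converge in $C^1$ you need $H_n R_{\alpha_n}H_n^{-1}$ and $H_n R_{\alpha_{n+1}}H_n^{-1}$ to be $C^1$-close, hence $|\alpha_{n+1}-\alpha_n|$ must beat $\|DH_n\|\cdot\|DH_n^{-1}\|$.  But you are simultaneously asking $h_{n+1}$ to plant a derivative spike of \emph{fixed} height $\lambda>1$ on an interval whose $q_{n+1}$ first rotates are disjoint, so $\|DH_n\|$ grows at least like $\lambda^n$ while the support shrinks like $1/q_{n+1}$.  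For Diophantine $\alpha$ the gaps $|\alpha_{n+1}-\alpha_n|\asymp 1/(q_n q_{n+1})$ are only polynomially small in $q_n$, and you cannot make them absorb the exponential blow-up of $H_n$.  (There is also a tension you do not address between ``$\|h_{n+1}-\mathrm{id}\|_{C^1}$ decays fast'' and ``$h_{n+1}$ installs a spike of fixed factor $\lambda$''.)  The paper sidesteps all of this by conjugating the \emph{fixed} rotation $R_\alpha$ directly: it builds $h_n$ with $Dh_n=e^{\Phi_n}/\xi_n$ for explicit bump sums $\Phi_n$, so that $h_n\to h$ in $C^0$ and $f_n=h_n R_\alpha h_n^{-1}\to f$ in $C^1$ with $\ln Df=\Theta\circ h^{-1}$, $\Theta=\lim(\Phi_N(\cdot+\alpha)-\Phi_N(\cdot))$.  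No rational approximation, no Liouville hypothesis.

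\medskip

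\textbf{Second gap: the backward conditions are not ``almost for free''.}  Your ergodic heuristic is pointed the wrong way --- the $C$-good points have \emph{forward-expanding} orbits, not forward-contracting --- and even after correction the zero-average identity $\int\log Df\,d\mu=0$ gives nothing pointwise about the two backward requirements, which pull in opposite directions: you need $\sup_{n<0}|Df^n(x)|=\infty$ (so backward derivative is sometimes huge) \emph{and} $\sum_{n<0}|Df^n(x)|^{-2}=\infty$ (so it is usually not large).  No pigeonhole delivers this; it must be engineered.  In the paper this is exactly where the work is: the bumps $\phi^\omega$ are placed on forward rotates $I_\omega+k\alpha$, $1\le k\le 2|\omega|-1$, with triangular profile and weights $(n+1)^{-4/3}$, and a ``low recurrence'' choice of scales guarantees that for $x$ in the Cantor set $K$ one has $\Phi_N(x)=0$, $\Phi_N(x+N\alpha)\gtrsim N^{2/3}$ (giving the forward summability), while along the backward orbit $\Phi_N(x-m\alpha)$ is at most of order $(\ln\ln m)^2$ --- slow enough that $\sum_{m\ge1}|Df^{-m}(x)|^{-2}$ diverges --- yet the backward orbit still visits regions where $\sup_N\Phi_N$ is large, forcing $\sup_{n<0}|Df^n(x)|=\infty$.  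None of this structure appears in your outline, and your spike scheme with a single factor $\lambda$ at each scale gives no mechanism to separate the two backward behaviors.

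\medskip

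A smaller point: you write that the spike ``boosts $Df_{n+1}$ at one specific point \ldots\ while simultaneously making $Df_{n+1}^{-1}$ small at the same point''.  But $Df^{-1}(x)=1/Df(f^{-1}(x))$ depends on $Df$ at $f^{-1}(x)$, not at $x$; enlarging $Df(x)$ does not by itself control $Df(x)+Df^{-1}(x)$.  The bound $\sup|Df+Df^{-1}|<C$ links consecutive orbit points and has to be checked along the whole orbit, which in the paper follows from $|\Phi_N(\cdot+\alpha)-\Phi_N(\cdot)|$ being uniformly bounded.
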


\bg \begin{proof}   \bk

Given an interval $I=[a,b]$, let $l(I)=a+\frac {3} {8}(b-a)$ and
$r(I)=a+\frac {5} {8} (b-a)$.

Our construction will depend on a sequence
$m_n \in \N$, $n \geq 0$, such that $m_0$ is large and
$m_{n+1}$ is much larger than $m_n$.

Let $\Omega$ be the set of all finite sequences
$\omega=(\omega_1,...,\omega_n)$ of $l$'s and $r$'s and length $|\omega|=n
\geq 0$.  For $\omega \in \Omega$, we define intervals $I_\omega$
inductively as follows.  Let $n=|\omega|$.  If $n=0$ we let $I_\omega$ be
the interval of length $2^{-m_0}$ centered on $0$.  If $n \geq 1$, let
$\omega'$ consist of $\omega$ stripped of its last digit, and let $I_\omega$
be the interval of length $2^{-m_n}$ centered on $t(I_{\omega'})$ where $t
\in \{l,r\}$ is the last digit of $\omega$.

Let $\phi:\R \to [0,1]$ be a smooth function symmetric around $0$,
supported on $[-1/2,1/2]$ and
such that $\phi|[-1/4,1/4]=1$.

Let $A^\omega:I_\omega \to [-1/2,1/2]$ be an affine homeomorphism.
Let us now define functions $\phi^\omega:\R/\Z \to \R$ on
the intervals $I_\omega+k \alpha$, $1 \leq k \leq 2 n-1$, so that
$\phi^\omega(x+k \alpha)=(n-|n-k|) \phi(A^\omega(x))$.

Note that by selecting $m_n$ sufficiently large, we may assume that
$I_\omega+k \alpha$ does not intersect $I_{\omega'}$ if
$|\omega|=|\omega'|=n$ and $0 \leq |k|
\leq 2^{2^n}$ (low recurrence).

In particular, the supports of the $\phi^\omega(x+\alpha)-\phi^\omega(x)$
do not intersect for $\omega$
of a fixed length (here we use $m_n$ large).  We let
$\phi_n=\sum_{|\omega|=n} \phi_\omega$.  Note that
$|\phi_n(x+\alpha)-\phi_n(x)| \leq 1$.  Note that low recurrence implies
that $\phi_n(x-m \alpha)=0$ if $m \geq 0$ and $2n-1+m \leq 2^{2^n}$.

Finally, define a non-decreasing sequence
$\Phi_N:\R/\Z \to \R$ by $\Phi_N=\sum_{0 \leq n \leq N}
\frac {1} {(n+1)^{4/3}} \phi_n$.  Note that if $x \in K=\cap_n
\cup_{|\omega|=n} I_\omega$ and $m \geq 0$,
if $\Phi_N(x-m \alpha) \geq \frac {n (n+1)} {2}$ then
$\phi_{n'}(x-m\alpha)>0$ for some $n \leq n' \leq N$
(since $\phi_n \leq n$), so that $m>2^{2^n}-2n+1$.
In particular, $\Phi_N(x)=0$.  On the other hand, a direct computation gives
$\Phi_N(x+N \alpha) \geq \frac {1} {10} N^{2/3}$, $N \geq 1$.

Clearly
$\Phi_N(x+\alpha)-\Phi_N(x)$ converges in $C^0$ to some continuous function
$\Theta$.

Note that
$\xi_N=\int e^{\Phi_N(x)} dx$ is close to $1$ (since the $m_n$ are large).
Let $\Psi_N=e^{\Phi_N}/\xi_N$,
and let $h_n:\R/\Z \to \R/\Z$ be such
that $Dh_n=\Psi_n$ and $h_n(0)=0$.  Note that $h_n$ converges fast in the
$C^0$ topology to some homeomorphism $h:\R/\Z \to \R/\Z$.
Then $f_n(x)=h_n(h_n^{-1}(x)+\alpha)$
is a sequence converging in the $C^1$ topology, and converging fast in the
$C^0$ topology (here we use $m_n$ large) to some $f$ satisfying
$f(x)=h(h^{-1}(x)+\alpha)$ and $\ln Df=\Theta
\circ h^{-1}$.  In particular, $f$ is minimal.
Note that all $x \in h(K)$ are $C$-good for some absolute $C$, since $Df^n$
is at least of order $e^{n^{2/3}}$ on $h(K)$.
For each $x$,
from time to time $h^{-1}(x)-n \alpha$ will visit
regions where $\sup_N \Phi_N$ is large, so for $x \in h(K)$,
$Df^{-n}(x)$ will be large (since $\Phi_N(h^{-1}(x))=0$).
Moreover, if $x \in h(K)$,
$\Phi_N(h^{-1}(x)-n \alpha)$ is at most of order $(\ln \ln n)^2$, so
$\sum_{n \geq 1} |Df^{-n}(x)|^{-2}=\infty$.
\bg \end{proof} \color{black}

\bg 
 
\end{document}